\documentclass[12pt]{amsart}
\usepackage{a4wide}
\usepackage{amssymb,mathtools}
\usepackage{amsmath}
\usepackage{amsthm}
\usepackage{enumerate}
\usepackage{comment}
\usepackage{graphicx}
\usepackage{hyperref}
\usepackage{amsrefs}

\usepackage{xcolor}

\theoremstyle{plain}

\newtheorem{theorem}{Theorem}[section]

\newtheorem*{claim*}{Claim}
\newtheorem{lemma}[theorem]{Lemma}
\newtheorem{proposition}[theorem]{Proposition}

\newtheorem{corollary}[theorem]{Corollary}

\newtheorem{observation}[theorem]{Observation}

\newcounter{maintheorem}

\newtheorem{mainth}[maintheorem]{Theorem}

\theoremstyle{definition}

\newtheorem{definition}[theorem]{Definition}
\newtheorem{remark}[theorem]{Remark}

\renewcommand{\epsilon}{\varepsilon}

\newcommand{\D}{\mathcal{D}}

\newcommand{\dist}{\mathrm{dist}}

\renewcommand{\leq}{\leqslant}
\renewcommand{\geq}{\geqslant}

\newcommand{\N}{\mathbb{N}}

\newcommand{\conv}{\mathrm{conv}}

\renewcommand{\phi}{\varphi}

\hypersetup{
    pdftitle={},
    pdfauthor={},
    pdfmenubar=false,
    pdffitwindow=true,
    pdfstartview=FitH,
    colorlinks=true,
    linkcolor=black,
    citecolor=teal,
    urlcolor=cyan
}


\hfuzz5pt 
\overfullrule=5pt

\newcommand{\nn}[1]{{\left\vert\kern-0.25ex\left\vert\kern-0.25ex\left\vert #1 
\right\vert\kern-0.25ex\right\vert\kern-0.25ex\right\vert}}

\newcommand{\seminorm}[1]{\left\lvert\hspace{-1 pt}\left\lvert\hspace{-1 pt}\left\lvert {#1}\right\lvert\hspace{-1 pt}\right\lvert\hspace{-1 pt}\right\lvert}

\author[C.A.~De~Bernardi]{Carlo Alberto De Bernardi}
\address{Dipartimento di Matematica per le Scienze economiche, finanziarie ed attuariali, Universit\`a Cattolica del Sacro Cuore, 20123 Milano, Italy}
\email{carloalberto.debernardi@unicatt.it} \email{carloalberto.debernardi@gmail.com}

\author[J.~Somaglia]{Jacopo Somaglia}
\address{Politecnico di Milano, Dipartimento di Matematica, Piazza Leonardo da Vinci 32, 20133 Milano, Italy.}
\email{jacopo.somaglia@polimi.it}

\subjclass[2020]{Primary 46B03, 	46B20 ; Secondary 52A07}
\keywords{Renorming, almost LUR, Rotund point, non-reflexive spaces}
\thanks{
The research of the first author was partially supported by the INdAM -GNAMPA Project,  CUP E53C23001670001, and by the MICINN project PID2020-112491GB-I00 (Spain). The research of the second author was partially supported by the INdAM -GNAMPA Project,  CUP E53C23001670001.
}

\title[Almost LUR points]{Some remarks on almost locally uniformly rotund points}

\begin{document}
	
	\begin{abstract} 
	We study the relations between different notions of almost locally uniformly rotund points that appear in literature. We show that every non-reflexive Banach space admits an equivalent norm having a point in the corresponding unit sphere  which is not almost locally uniformly rotund, and
which
    is strongly exposed by all its supporting functionals. This result is in contrast with a characterization due to P.~Bandyopadhyay, D.~Huang, and B.-L.~Lin from 2004. We also show that such a characterization remains true in reflexive Banach spaces.
	\end{abstract}
\maketitle

\section{Introduction}

The aim of the present paper is to study some relations between different  rotundity properties of a given point $x$ belonging to the unit sphere $S_X$ of a Banach space $X$.   Several  notions of rotundity have been introduced and widely studied in the literature. The most common  are the notions of  {\em extreme}, {\em rotund}, and  {\em local uniformly rotund} point (LUR point in short). It is well-known and easy-to-prove that if $x$ is an LUR point then  {\em $x$ is strongly exposed by all its supporting functionals}. In the sequel, points satisfying this last condition are called {\em nicely strongly exposed} (NSE point in short), also known as strongly convex, see \cite{ShuZali}. A notion closely related to that of nicely strongly exposed points has also been studied in the context of optimization of convex functions under the denominations \textit{small diameter property} in \cite{DEBEMIMOSOM} and \textit{strongly adequate functions} in \cite{VolleZali}. 

The following definition has been  introduced and studied in \cite{BaHuLiTr00}, as a generalization of locally uniform rotundity:
 $x$ is an \textit{almost locally uniformly rotund} point (aLUR point in short) if,  for every  pair of sequences $\{x_n\}_{n}\subset S_X$ and $\{x_m^*\}_{m}\subset S_{X^*}$ such that 
\begin{equation*}
		\textstyle	\lim_m\left(\lim_n x_m^*\left(\frac{x_n+x}{2}\right)\right)=1,
\end{equation*} 
then $\{x_n\}_{n}$ converges to $x$. In \cite{BaHuLi04}*{Corollary 4.6} the authors provide several characterizations of aLUR points and, in particular, claim that $x\in S_X$ is an aLUR point if and only if it is an NSE point. Since then, this characterization has been quoted in several papers (see, e.g., \cites{BL, DEBESOMALUR, BLLN, BG}). We refer to \cite{Z25} for a detailed list of papers dealing with the notion of aLUR. The fact that every aLUR point is an NSE point is an easy exercise (for a proof see Observation~\ref{obs: aLUR implies NSE} below). Unfortunately, the proof of the other implication, provided in \cite{BaHuLi04},  contains a gap (see Remark \ref{r: BaHuLigap} below). 
The main aim of our paper is to prove the following characterization of reflexive spaces
\begin{mainth}\label{th: thm A}
 A Banach space $X$ is reflexive if and only if for every equivalent norm $\|\cdot\|$ on $X$ the set of all aLUR points of $S_{(X,\|\cdot\|)}$ coincides with the set of all NSE points of $S_{(X,\|\cdot\|)}$.
\end{mainth}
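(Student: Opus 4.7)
Since every aLUR point is NSE (see Observation~\ref{obs: aLUR implies NSE}), Theorem~A reduces to two independent claims: (i) in any reflexive Banach space, under any equivalent renorming, every NSE point is aLUR; (ii) every non-reflexive Banach space admits an equivalent norm and a point $x$ in the corresponding unit sphere that is NSE but not aLUR.

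For the reflexive direction (i), the plan is as follows. Fix an NSE point $x\in S_X$ and sequences $\{x_n\}\subset S_X$, $\{x_m^*\}\subset S_{X^*}$ with $\lim_m\lim_n x_m^*\bigl(\tfrac{x_n+x}{2}\bigr)=1$. For each $m$ the inner limit $b_m:=\lim_n x_m^*(x_n)$ exists by hypothesis, and the constraints $x_m^*(x_n),\,x_m^*(x)\leq 1$ together with $\tfrac{b_m+x_m^*(x)}{2}\to 1$ force $b_m\to 1$ and $x_m^*(x)\to 1$. Reflexivity lets one extract a weakly convergent subsequence $x_{n_k}\rightharpoonup y\in B_X$, and reflexivity of $X^*$ lets one extract a weakly convergent subsequence $x_{m_j}^*\rightharpoonup f\in B_{X^*}$. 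Since the inner limit is well-defined, $x_m^*(y)=b_m$ for every $m$, hence $f(y)=\lim_j b_{m_j}=1$ and similarly $f(x)=1$; in particular $f$ is a supporting functional at $x$ and satisfies $f(y)=1$. Applying the NSE property of $x$ to the constant sequence equal to $y$ forces $y=x$. Consequently every weak cluster point of $\{x_n\}$ equals $x$, so $x_n\rightharpoonup x$, which gives $f(x_n)\to f(x)=1$; a second application of NSE — this time to the sequence $\{x_n\}$ — yields $x_n\to x$ in norm.

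For the non-reflexive direction (ii), I would exploit a characterization of non-reflexivity — such as James' theorem (existence of a functional in $S_{X^*}$ that does not attain its norm) or the associated existence of a bounded sequence in $X$, respectively $X^*$, without any weakly, respectively weak-$*$, convergent subsequence — to build an equivalent norm $\nn{\cdot}$ on $X$ with a distinguished point $x$ on its unit sphere. The construction has to be tailored so that (a) locally near $x$ the new unit ball is strictly convex in a sufficiently strong sense to make every supporting functional at $x$ strongly expose $x$, and (b) the non-reflexive oscillation survives in a genuinely iterated form: there exist $\{x_n\}\subset S_X$ bounded away from $x$ and $\{x_m^*\}\subset S_{X^*}$ such that $\lim_m\lim_n x_m^*\bigl(\tfrac{x_n+x}{2}\bigr)=1$. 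A natural scheme is to describe the new unit ball as the intersection of the old ball with a thin, carefully shaped convex body built from the non-norm-attaining functional, so that strong exposure at $x$ is preserved while the non-attainment feeds the two-sequence pathology.

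The main obstacle is (ii). The NSE hypothesis is strong enough to rule out any \emph{single} sequence $\{z_n\}\subset B_X$ with $f(z_n)\to 1$ (for a supporting functional $f$ at $x$) failing to converge to $x$, so the failure of aLUR has to be genuinely bivariate: taking the $n$-limit first stabilizes nothing visible to a single $f$, yet the outer $m$-limit then reaches $1$ by choosing a different $f=x_m^*$ at each stage. Designing a renorming that simultaneously keeps pointwise strong exposure at $x$ and encodes the non-reflexivity into this two-parameter phenomenon is, I expect, the heart of the argument. The reflexive direction (i) is, by contrast, routine once one observes that weak (and weak-$*$) compactness reduce the iterated-limit condition to a single supporting functional and a single sequence, at which point NSE closes the argument.
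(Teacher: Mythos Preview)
Your reflexive direction (i) is correct and close in spirit to the paper's argument (Theorem~\ref{th: equiv aLUR}). The paper argues by contradiction: assuming $\|x_n-x\|>\epsilon$, it picks a $w^*$-cluster point $x^*$ of $\{x_m^*\}$ (so $x^*\in\D_X(x)$), uses NSE to get $x^*(x_n)\leq\delta<1$, then shows any weak cluster point $y$ of $\{x_n\}$ satisfies $x_m^*((y+x)/2)=\alpha_m$, hence $\|(y+x)/2\|=1$, hence $y=x$ by rotundity, contradicting $x^*(x_n)\leq\delta$. Your direct version---extracting $f$ once, identifying every weak cluster point with $x$, and then reapplying NSE to the \emph{original} sequence via $f(x_n)\to1$---is a clean variant and arguably tidier.

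The non-reflexive direction (ii), however, is not yet a proof: you correctly isolate it as the crux, but your sketch (``intersect the old ball with a thin convex body built from a non-norm-attaining functional'') is not the paper's construction and it is not clear it can be made to work. The paper (Theorem~\ref{t: nonreflexive}) proceeds quite differently. It first reduces to the separable case via a norm-extension lemma (Lemma~\ref{lemma: NSE superspace}) and the persistence of ``not $w$-aLUR'' in superspaces (Observation~\ref{obs: not aLUR superspace}). Then it exploits the decomposition $X^{***}=X^\perp\oplus X^*$ and Bishop--Phelps to find $x_1^{***}\in S_{X^{***}}$ attaining its norm at some $x_1^{**}\in S_{X^{**}}\setminus X$ while $\sup_{B_X}|x_1^{***}|<1/3$. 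With $f_1:=x_1^{***}|_X$ one builds an M-basis $(e_n,f_n)$ with $e_1$ chosen so that $f_1(e_1)=1$, and defines the new ball as $D=\conv\bigl(B_{(X,\|\cdot\|)}\cup T(B_{\ell_2})\bigr)$, where $T\colon\ell_2\to X$ is a compact injection with dense range sending the $\ell_2$-basis to $(e_n/n^2)$. The compact Hilbert piece makes $e_1$ a \emph{smooth} point of $D$ with unique supporting functional $f_1$, and the uniform convexity of $\ell_2$ (together with $\sup_{B_X}|f_1|<1/3$, which forces the $B_{(X,\|\cdot\|)}$-component of any almost-maximizing sequence to become negligible) yields NSE at $e_1$. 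On the other hand, a bidual computation shows $B_{(X^{**},\nn{\cdot}^{**})}=\conv(B_{(X^{**},\|\cdot\|^{**})}\cup B)$ and that $\nn{x_1^{***}}^{***}=1$; since $x_1^{***}(e_1)=x_1^{***}(x_1^{**})=1$, the segment $[e_1,x_1^{**}]$ lies in the bidual sphere, so $e_1$ is not a rotund point of $B_{X^{**}}$ and hence, by Theorem~\ref{th: char w-aLUR}, not even $w$-aLUR. The key ideas you are missing are thus: (a) enlarge rather than shrink the ball, gluing in a compact Hilbert image to force smoothness and NSE at the distinguished point; and (b) certify failure of aLUR \emph{geometrically} via the bidual rotundity criterion, rather than by exhibiting the two sequences directly.
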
 

\noindent In particular, we show that the equivalence between aLUR and NSE does not hold in general.

Let us briefly describe the structure of the paper. In Section 2, after some notation and preliminaries, in addition to  
aLUR and NSE properties, we introduce other closely related definitions and we study the most immediate relations between them. In Section 3, we state and prove the main results of the paper.
Theorem~\ref{t: nonreflexive} shows  that the equivalence between aLUR and NSE fails, in the following strong sense:  
{\em every non-reflexive Banach space admits an equivalent norm such that the corresponding unit sphere contain an NSE point which is not aLUR}.
On the other hand, Theorem~\ref{th: equiv aLUR} shows that the equivalence between  aLUR and NSE holds whenever $X$ is a reflexive Banach space. Combining these two results, we obtain a characterization of reflexive Banach spaces (see Corollary \ref{c: characterizationreflexive}). Moreover, in the spirit of \cites{BaHuLi04, BaHuLiTr00}, in Theorem~\ref{t: caratNSE} we give a characterization of property NSE in terms of double limit. It turns out that NSE is equivalent to a property that is similar to aLUR but in which, roughly speaking, instead of taking iterated limits, we consider convergence of the double limit in the  Pringsheim's sense (i.e., letting both indexes tend to $\infty$, independently of each other).

\section{Basic notions}
We follow the notation and terminology introduced in \cite{DEBESOMALUR}. Throughout this  paper, all Banach spaces are real and infinite-dimensional. Let $X$ be a Banach space, by $X^*$ we denote the dual space of $X$. By $B_X$ and $S_X$ we denote the closed unit ball and the unit sphere of $X$, respectively.
	Moreover, in situations when more than one
	norm on $X$ is considered, we denote by $B_{(X,\|\cdot\|)}$ and $S_{(X,\|\cdot\|)}$ the closed unit ball and the closed unit sphere with respect to the norm ${\|\cdot\|}$, respectively. By $\|\cdot\|^*$, $\|\cdot\|^{**}$, $\|\cdot\|^{***}$ we denote the dual, bidual, third dual  norm of $\|\cdot\|$, respectively.  For $x,y\in X$, $[x,y]$ denotes the closed segment in $X$ with
endpoints $x$ and $y$.
    
 A \emph{biorthogonal system} in a separable Banach space $X$ is a system $(e_n, f_n)_{n}\subset X\times X^*$, such that $f_n(e_m)=\delta_{n,m}$ ($n,m\in \N$). A biorthogonal system is \emph{fundamental} if ${\rm span}\{e_n\}_{n}$ is dense in $X$; it is \emph{total} when ${\rm span}\{f_n\} _{n}$ is $w^*$-dense in $X^*$. A \emph{Markushevich basis} (M-basis) is a fundamental and total biorthogonal system. We refer to \cites{HMVZ,HRST,RS23} and references therein for more information on M-bases.

Let us recall that the {\em duality map} $\D_X: S_X\to2^{S_{X^*}}$ is the function defined, for each $x\in S_X$, by
$$\D_X(x):=\{x^*\in S_{X^*}\colon\, x^*(x)=1\}.$$

\begin{definition}\label{d: alurs}
Let $x\in S_X$. We say that:
	\begin{enumerate}
    \item $x$ is a {\em rotund} point of $B_X$ if for $y\in S_X$, such that $\|y+x\|=2$, we have $x=y$; 
    \item $x$ is  {\em strongly exposed} by $x^*\in S_{X^*}$ if 
 $x_n\to x$ for all sequences $\{x_n\}_n\subset B_X$ such that $\lim_{n\to \infty}x^*(x_n)=1$;
 \item\label{alur3} $x$ is a \textit{nicely strongly exposed point} of $S_X$ (or satisfies property NSE) if $x$ is strongly exposed by $x^*$, whenever  $x^*\in\D_X(x)$;
		\item\label{alur1} $x$ is an \textit{almost locally uniformly rotund point} of $S_X$ (or satisfies property aLUR) if,  for every  pair of sequences $\{x_n\}_{n}\subset S_X$ and $\{x_m^*\}_{m}\subset S_{X^*}$ such that \begin{equation}\label{eq: doppiolimite}
			\lim_m\left(\lim_n x_m^*\left(\frac{x_n+x}{2}\right)\right)=1,
		\end{equation} we have that $\{x_n\}_{n}$ converges to $x$;

				\item\label{alur1'} $x$ satisfies property aLUR$'$ if,  for every  pair of sequences $\{x_n\}_{n}\subset S_X$ and $\{x_m^*\}_{m}\subset S_{X^*}$ such that 
    \begin{equation}\label{eq: doppiolimiteliminf}
			\lim_m\left(\liminf_n \,x_m^*\left(\frac{x_n+x}{2}\right)\right)=1,
		\end{equation} 
        we have that $\{x_n\}_{n}$ converges to $x$.
\end{enumerate}
\end{definition}
	
	\begin{remark}\label{remark: aLUR}	    The ``double limit'' in \eqref{eq: doppiolimite} has to be intended in the  sense indicated by the brackets: 
		\begin{itemize}
			\item for every $m\in\N$, the limit $\alpha_m:=\lim_n x_m^*((x_n+x)/2)$ exists;
			\item $\lim_m \alpha_m=1$.
		\end{itemize}
    The formula in \eqref{eq: doppiolimiteliminf} should be read analogously.
 	\end{remark}

\noindent The following proposition shows that properties aLUR and aLUR$'$ are indeed equivalent.
\begin{proposition}\label{p: liminf}
    		Let $x\in S_X$. The following assertions are equivalent:
		\begin{enumerate}
			\item\label{P1} $x$ satisfies property aLUR;
			\item\label{P4} $x$ satisfies property aLUR$'$.
		\end{enumerate}

\end{proposition}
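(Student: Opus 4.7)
The implication \eqref{P4}$\Rightarrow$\eqref{P1} is immediate: whenever the inner $\lim_n x_m^*((x_n+x)/2)$ exists, it coincides with the corresponding $\liminf_n$, so any pair of sequences satisfying the hypothesis of aLUR also satisfies that of aLUR$'$. The substance lies in the converse \eqref{P1}$\Rightarrow$\eqref{P4}, and my plan is to argue by contradiction, combined with a Cantor diagonal extraction.

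Assume $x$ satisfies aLUR, and suppose there exist $\{x_n\}_n\subset S_X$ and $\{x_m^*\}_m\subset S_{X^*}$ with
\[
\lim_m \liminf_n x_m^*\Bigl(\tfrac{x_n+x}{2}\Bigr)=1,
\]
yet $x_n\not\to x$. My first step is to replace $\{x_n\}$ by a subsequence staying at distance at least some $\epsilon>0$ from $x$. This is harmless for the hypothesis because, for each fixed $m$, passing to a subsequence can only raise $\liminf_n x_m^*((x_n+x)/2)$, while the trivial bound $x_m^*((x_n+x)/2)\leq 1$ keeps it no larger than $1$; a squeeze then preserves the outer limit $\lim_m(\cdot)=1$.

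My second step is a Cantor diagonal argument: since every sequence $\{x_m^*((x_n+x)/2)\}_n$ lies in $[-1,1]$, I inductively choose nested subsequences and take the diagonal to obtain a further subsequence $\{y_j\}_j$ of the original $\{x_n\}_n$ along which $\beta_m:=\lim_j x_m^*((y_j+x)/2)$ exists for every $m$. Because any subsequential limit of a sequence lies between its $\liminf$ and its supremum, we have $\liminf_n x_m^*((x_n+x)/2)\leq\beta_m\leq 1$, whence $\lim_m\beta_m=1$. Now the pair $(\{y_j\},\{x_m^*\})$ fits the hypothesis of aLUR, so \eqref{P1} forces $y_j\to x$; but $\|y_j-x\|\geq\epsilon$ by construction, a contradiction.

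I do not foresee a genuine obstacle: the argument is a routine double-subsequence extraction. The only point worth attention is verifying that each of the two extractions preserves the value $1$ of the outer limit, and in both cases this follows from the a priori upper bound $x_m^*((x_n+x)/2)\leq 1$, which combines with the fact that subsequencing can only raise a $\liminf$ to give the desired squeeze.
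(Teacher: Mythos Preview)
Your argument is correct and follows essentially the same approach as the paper: both argue by contradiction, pass to a subsequence bounded away from $x$, and then perform a Cantor diagonal extraction to manufacture genuine inner limits so that the aLUR hypothesis applies. The only cosmetic difference is that the paper extracts at each stage so as to realize the $\liminf$ itself, whereas you merely use Bolzano--Weierstrass to make the limits exist; either way the resulting $\beta_m$ are squeezed between the relevant $\liminf$ and $1$, and the contradiction follows identically.
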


\begin{proof}
The implication $\eqref{P4} \Rightarrow \eqref{P1}$ is trivial. Suppose that $\eqref{P1}$ holds and suppose on the contrary that $x\in S_X$ does not satisfy property aLUR$'$. This means that there exist $\varepsilon> 0$ and two sequences $\{x_n\}_{n} \subset S_X$ and $\{x^*_m\}_{m}\subset S_{X^*}$ such that 
    \begin{equation*}
\textstyle 			\lim_m\left(\liminf_n \,x_m^*\left(\frac{x_n+x}{2}\right)\right)=1
		\end{equation*} 
holds, and $\|x_n-x\|>\varepsilon$ for every $n\in\N$.  By  definition of $\liminf$, there exists a subsequence $\{x_n^1\}_{n}$ of $\{x_n\}_{n}$ such that $\lim_{n} x_1^*(\frac{x_n^1+x}{2})=\liminf_{n} x^*_1(\frac{x_n+x}{2})=:\alpha_1$. By iterating the argument, we can inductively define sequences $\{x_n^m\}_{n}$ ($m\in\N$) such that, for every $m$, we have:
\begin{itemize}
    \item $\{x_n^{m+1}\}_{n}$ is a  subsequence of $\{x_n^m\}_{n}$;
    \item $\lim_{n} x_{m+1}^*\left(\frac{x_n^{m+1}+x}{2}\right)=\alpha_{m+1}:= \liminf_{n} x^*_{m+1}\left(\frac{x_n^m+x}{2}\right)$.
\end{itemize}
Now, let us consider the  subsequence $\{y_{n}\}_{n}$ of $\{x_n\}_{n}$ defined by $y_n=x^n_n$ ($n\in\N$), and observe that, since \[\textstyle\liminf_{n} x^*_{m+1}\left(\frac{x_n^m+x}{2}\right)\geq \liminf_{n} x^*_{m+1}\left(\frac{x_n+x}{2}\right)\qquad (m\in\N),\]
we have \begin{equation*}
    \textstyle  
    1\geq \lim_m\left(\lim_n \,x_m^*\left(\frac{y_{n}+x}{2}\right)\right)=\lim_m \alpha_m \geq \lim_m\left(\liminf_n \,x_m^*\left(\frac{x_n+x}{2}\right)\right) =1.
\end{equation*}
Therefore, by  assumption \eqref{P1}, the sequence $\{y_{n}\}_{n}$ must converge to $x$, which is in contradiction with $\|x_n-x\|>\varepsilon$ for every $n\in\N$. Thus $\eqref{P1} \Rightarrow \eqref{P4}$.
\end{proof}

\begin{remark} In \cite{Z25} the author suggested to use, as the definition of almost locally uniformly rotund point, the one we denote by aLUR$'$. The main reason of this suggestion is the fact that the proof of \cite{BaHuLiTr00}*{Theorem 6} contains a gap (i.e. the existence of a limit). This gap can be fixed by using aLUR$'$ instead of aLUR (see \cite{Z25}*{Theorem E}). In light of Proposition~\ref{p: liminf}, it turns out that the two definitions  aLUR and aLUR$'$ are equivalent. Therefore, combining \cite{Z25}*{Theorem E} with Proposition~\ref{p: liminf} we get that \cite{BaHuLiTr00}*{Theorem 6} is correct without using a different definition. 
\end{remark}

We conclude this section by recalling the definition and a geometric characterization of the weak counterpart of aLUR.

\begin{definition}\label{walur}
    Let $x\in S_X$. We say that $x$ is a \textit{weakly almost locally uniformly rotund} $w$-aLUR point of $S_X$ if,  for every  pair of sequences $\{x_n\}_{n}\subset S_X$ and $\{x_m^*\}_{m}\subset S_{X^*}$ such that \begin{equation*}
			\lim_m\left(\lim_n x_m^*\left(\frac{x_n+x}{2}\right)\right)=1,
		\end{equation*} we have that $\{x_n\}_{n}$ converges weakly to $x$.
\end{definition}
\noindent
The following result, due to Bandyopadhyay, Huang, Lin and Troyanski is contained in \cite{BaHuLiTr00}, will be used in Theorem \ref{t: nonreflexive} for proving that a certain point of the unit sphere is not $w$-aLUR. 

\begin{theorem}[\cite{BaHuLiTr00}*{Corollary 8}]\label{th: char w-aLUR}
    Let $X$ be a Banach space. For $x\in S_X$, the following are equivalent
    \begin{enumerate}
        \item $x$ is a rotund point of $B_{X^{**}}$;
        \item $x$ is a $w$-aLUR point of $S_X$.
    \end{enumerate}
\end{theorem}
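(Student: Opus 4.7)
The plan is to work inside the bidual $X^{**}$ and exploit the fact that the weak topology on $X$ is the restriction of the $w^*$-topology on $X^{**}$ under the canonical embedding.

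For the implication (i)$\Rightarrow$(ii), assume $x$ is a rotund point of $B_{X^{**}}$ and take sequences $\{x_n\}\subset S_X$, $\{x_m^*\}\subset S_{X^*}$ such that $\alpha_m:=\lim_n x_m^*((x_n+x)/2)$ exists for every $m$ and $\alpha_m\to 1$. Suppose, toward a contradiction, that $\{x_n\}$ does not converge weakly to $x$. Then, passing to a subsequence, I may assume that all the $x_n$ lie outside some basic $w^*$-neighborhood $V$ of $x$ in $X^{**}$. Since $B_{X^{**}}\setminus V$ is $w^*$-compact, $\{x_n\}$ admits a $w^*$-cluster point $y^{**}\in B_{X^{**}}\setminus V$, so in particular $y^{**}\neq x$. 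For each $m$ the real sequence $\{x_m^*(x_n)\}_n$ converges to $2\alpha_m-x_m^*(x)$, and its unique cluster point must coincide with its limit, hence $y^{**}(x_m^*)=2\alpha_m-x_m^*(x)$. Consequently $(y^{**}+x)(x_m^*)=2\alpha_m\to 2$, which forces $\|y^{**}+x\|^{**}=2$; combined with $\|y^{**}\|^{**}\leq 1=\|x\|$ and the triangle inequality this also gives $\|y^{**}\|^{**}=1$. Rotundity of $x$ in $B_{X^{**}}$ then yields $y^{**}=x$, the desired contradiction.

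For the converse (ii)$\Rightarrow$(i), I argue contrapositively. Suppose $x$ is not a rotund point of $B_{X^{**}}$, so one can pick $y^{**}\in S_{X^{**}}$, $y^{**}\neq x$, with $\|y^{**}+x\|^{**}=2$. Choose $\{x_m^*\}\subset S_{X^*}$ with $(y^{**}+x)(x_m^*)\to 2$ (whence both $y^{**}(x_m^*)\to 1$ and $x_m^*(x)\to 1$) and fix a separating $z^*\in S_{X^*}$ with $y^{**}(z^*)\neq z^*(x)$. A diagonal application of Goldstine's theorem produces $x_n\in B_X$ satisfying $|x_j^*(x_n)-y^{**}(x_j^*)|<1/n$ for $j=1,\dots,n$ and $|z^*(x_n)-y^{**}(z^*)|<1/n$. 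Taking $j=n$ and using $y^{**}(x_n^*)\to 1$ gives $\|x_n\|\to 1$, so the normalizations $\widetilde x_n:=x_n/\|x_n\|\in S_X$ still satisfy $\lim_n x_m^*((\widetilde x_n+x)/2)=(y^{**}(x_m^*)+x_m^*(x))/2\to 1$ while $\lim_n z^*(\widetilde x_n)=y^{**}(z^*)\neq z^*(x)$. Hence $\widetilde x_n\not\to x$ weakly, contradicting $w$-aLUR.

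The most delicate step is the diagonal Goldstine construction in (ii)$\Rightarrow$(i): one has to produce a single sequence in $B_X$ whose images under the countably many functionals $\{x_m^*\}\cup\{z^*\}$ simultaneously approximate the values of $y^{**}$, and then check that the subsequent normalization preserves the iterated limit identity. Both points are routine once $\|x_n\|\to 1$ has been established, so the substantive content lies in the $w^*$-cluster-point argument in (i)$\Rightarrow$(ii) above.
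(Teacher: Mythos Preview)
Your proof is correct. However, the paper does not supply its own argument for this theorem: it is quoted verbatim from \cite{BaHuLiTr00}*{Corollary~8} and used as a black box (notably in Observation~\ref{obs: not aLUR superspace} and Theorem~\ref{t: nonreflexive}). There is therefore no in-paper proof to compare against; your self-contained argument via $w^*$-cluster points in $B_{X^{**}}$ for (i)$\Rightarrow$(ii) and the diagonal Goldstine construction for (ii)$\Rightarrow$(i) is a valid independent justification of the cited result.
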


\begin{remark}
Clearly, if a point is aLUR, it is $w$-aLUR. The converse is not true in general. Indeed, in \cite{DEBEPRESOMMLUR}*{Section 2.1}, it is provided a norm which is WLUR, therefore each point of the unit sphere is $w$-aLUR, but it is not MLUR, thus there exists a point in the unit sphere which is not aLUR (see also \cite{D15}).
\end{remark}

\section{Main results}

This section is devoted to the study of relations between  the different notions introduced in Definition \ref{d: alurs}. Let us start with the following easy-to-prove observation, asserting that  property aLUR implies property NSE. For the sake of completeness we include a proof of it.

\begin{observation}\label{obs: aLUR implies NSE} Let $x\in S_X$. If $x$ satisfies property aLUR then $x$ satisfies property NSE.    
\end{observation}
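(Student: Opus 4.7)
The plan is to reduce the NSE condition to a direct application of aLUR using the \emph{constant} sequence of functionals $x_m^* \equiv x^*$. Concretely, fix $x^* \in \D_X(x)$ and a sequence $\{x_n\}_n \subset B_X$ with $x^*(x_n) \to 1$; I must show that $x_n \to x$.

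First I would handle a minor normalization issue so that the hypothesis of aLUR (sequences living in $S_X$) is available. From $x^*(x_n) \leq \|x_n\| \leq 1$ and $x^*(x_n) \to 1$ I obtain $\|x_n\| \to 1$; in particular $x_n \neq 0$ eventually, and setting $y_n := x_n / \|x_n\|$ gives
\[
\|y_n - x_n\| = \bigl|1 - \|x_n\|\bigr| \to 0, \qquad x^*(y_n) = \tfrac{x^*(x_n)}{\|x_n\|} \to 1.
\]
Thus it suffices to prove $y_n \to x$, since then $x_n = y_n + (x_n - y_n) \to x$.

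Now I would take $x_m^* := x^*$ for every $m \in \N$ and plug $\{y_n\}_n \subset S_X$ together with $\{x_m^*\}_m \subset S_{X^*}$ into the definition of aLUR. Since $x^*(x) = 1$, for each fixed $m$ the inner limit
\[
\alpha_m := \lim_n x_m^*\!\left(\tfrac{y_n + x}{2}\right) = \lim_n \tfrac{x^*(y_n) + 1}{2} = 1
\]
exists and equals $1$, so $\lim_m \alpha_m = 1$. Hypothesis aLUR then yields $y_n \to x$, and combined with $\|y_n - x_n\| \to 0$ this gives $x_n \to x$, as required.

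There is really no serious obstacle: the argument is essentially a tautology once one notices that a single supporting functional can be repeated to form a legal sequence $\{x_m^*\}_m$. The only point requiring care is the normalization step, needed because NSE is phrased for sequences in $B_X$ while aLUR requires sequences in $S_X$.
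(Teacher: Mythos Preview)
Your argument is correct and follows exactly the paper's approach: take the constant sequence $x_m^*\equiv x^*$ and read off convergence from the aLUR hypothesis. Your version is in fact slightly more careful, since you explicitly carry out the normalization from $B_X$ to $S_X$ that the paper glosses over.
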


\begin{proof} Assume that $x$ satisfies aLUR. Let $x^*\in\D_X(x)$ and $\{x_n\}_{n}\subset S_X$ be such that $x^*(x_n)\to 1$. Then, if we define $x_m^*=x^*$ ($m\in\N$), we have that \eqref{eq: doppiolimite} is satisfied and hence $x_n\to x$ (since $x$ satisfies property aLUR). We have proved that $x$ is strongly exposed by $x^*$ and hence, by the arbitrariness of $x^*\in\D_X(x)$,  that  $x$ satisfies NSE.     
\end{proof}

	\begin{theorem}\label{th: equiv aLUR}
    Let $X$ be a reflexive Banach space. If $x\in S_X$ satisfies NSE, then $x$ satisfies aLUR.
	\end{theorem}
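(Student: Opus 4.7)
The plan is to argue by contradiction, exploiting the sequential weak compactness of $B_X$ and $B_{X^*}$ afforded by reflexivity of $X$ (and hence of $X^*$), and then invoking the NSE hypothesis twice.

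Suppose $x$ satisfies NSE but not aLUR. Then there exist $\varepsilon>0$ and sequences $\{x_n\}_n\subset S_X$, $\{x_m^*\}_m\subset S_{X^*}$ for which the iterated limit in \eqref{eq: doppiolimite} equals $1$, while $\|x_n-x\|\geq\varepsilon$ for every $n$. Writing $a_m:=x_m^*(x)$ and $b_m:=\lim_n x_m^*(x_n)$, the inner limit exists by hypothesis and $\alpha_m:=(a_m+b_m)/2\to 1$. Since $a_m,b_m\in[-1,1]$, this forces $a_m\to 1$ and $b_m\to 1$.

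By reflexivity we may, after passing to subsequences (and using that each $b_m$, being itself a limit in $n$, is unaffected by replacing $\{x_n\}$ by a subsequence, while $\alpha_m\to 1$ is unaffected by replacing $\{x_m^*\}$ by a subsequence), assume that $x_n\rightharpoonup y$ for some $y\in B_X$ and that $x_m^*\rightharpoonup x^*$ for some $x^*\in B_{X^*}$. For each fixed $m$, weak convergence $x_n\rightharpoonup y$ gives $x_m^*(y)=\lim_n x_m^*(x_n)=b_m$. Letting $m\to\infty$ along the chosen subsequence, weak convergence of $\{x_m^*\}$ yields $x^*(y)=\lim_m b_m=1$, and analogously $x^*(x)=\lim_m a_m=1$. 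In particular $x^*\in S_{X^*}\cap\D_X(x)$.

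Now invoke NSE at $x$: the functional $x^*$ strongly exposes $x$. Applied to the constant sequence $y,y,\dots$ in $B_X$, for which $x^*(y)=1$, this forces $y=x$. Hence (after extraction) $x_n\rightharpoonup x$, so $x^*(x_n)\to x^*(x)=1$; a second application of strong exposedness gives $x_n\to x$ in norm, contradicting $\|x_n-x\|\geq\varepsilon$.

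The only delicate point is keeping the iterated-limit hypothesis alive under the two successive extractions, which is exactly what the definition of $\alpha_m$ as a genuine limit (rather than a $\liminf$ or $\limsup$) allows, together with the existence of $b_m=\lim_n x_m^*(x_n)$. The main structural input is reflexivity of $X^*$, which is what provides the weak cluster point $x^*$ that simultaneously norms $x$ and the weak cluster point $y$ of $\{x_n\}$; absent reflexivity, one would only have a $w^*$-cluster point in $X^{***}$, and this is essentially where the characterization must fail in general (as realized by Theorem~\ref{t: nonreflexive}).
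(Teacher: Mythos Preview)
Your proof is correct and follows essentially the same route as the paper's: argue by contradiction, use reflexivity to extract a weak cluster point $y$ of $\{x_n\}$ and a ($w^*$-)cluster point $x^*\in\D_X(x)$ of $\{x_m^*\}$, and then use NSE to force $y=x$ and derive the contradiction. The only minor difference is in how $y=x$ is obtained: you conclude it directly from $x^*(y)=1$ and strong exposure by $x^*$, while the paper instead shows $\|(x+y)/2\|=1$ and invokes the rotundity of $x$ implied by NSE; your argument is slightly more direct.
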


\begin{proof} Let $x$ be an NSE point of $S_X$, and suppose on the contrary that $x$ does not satisfy aLUR. Then, passing to suitable subsequences if necessary, it is easy to see that there exist $\epsilon>0$  and sequences  $\{x_n\}_{n}\subset S_X$, $\{x_m^*\}_{m}\subset S_{X^*}$ such that: \begin{itemize}	\item \eqref{eq: doppiolimite} holds, that is: 	\begin{itemize} 		\item for $m\in\N$, the limit $\alpha_m:=\lim_n(x_m^*((x_n+x)/2))$ exists; 		\item $\lim_m \alpha_m=1$; 	\end{itemize} 	\item $\|x-x_n\|>\epsilon$, for every $n\in\N$; 	\item there exists $l:=\lim_m x_m^*(x)$.  \end{itemize} Let $x^*\in B_{X^*}$ be a $w^*$-cluster point of the sequence $\{x_m^*\}_{m}\subset S_{X^*}$.  Then, from \eqref{eq: doppiolimite}, we have that $l=1$ and hence that $x^*\in\D_X(x)$. Since $\|x-x_n\|>\epsilon$ and since $x$ satisfies NSE, there exists $\delta\in(0,1)$ such that $x^*(x_{n})\leq\delta$, whenever $n\in\N$. Now,  let $y\in B_X$ be a $w$-cluster point of the sequence $\{x_{n}\}_{n}$ and observe that by our hypothesis we have $\alpha_m:=x_m^*((y+x)/2)$, whenever $m\in\N$. Hence, $x^*((y+x)/2)=1$ and $x^*(y)=1$. A contradiction since $y$ is a $w$-cluster point of the sequence $\{x_{n}\}_{n}$ and $x^*(x_{n})\leq\delta$, whenever $n\in\N$.
\end{proof}

Then, in the spirit of \cites{BaHuLi04, BaHuLiTr00}, we provide  a characterization of property NSE in terms of double limit. To do this, we introduce  a slight variant of aLUR property, in which, roughly speaking, instead of taking iterated limits, we consider convergence of the double limit in the  Pringsheim's sense. The idea of the proof is analogous  to the one of Theorem~\ref{th: equiv aLUR}.

\begin{theorem}\label{t: caratNSE}
    		A point $x\in S_X$ satisfies property NSE if and only if 
		for every  pair of sequences $\{x_n\}_{n}\subset S_X$ and $\{x_m^*\}_{m}\subset S_{X^*}$ such that \begin{equation}\label{eq: doppiolimiteunif}
		\lim_{m,n}x_m^*\left(\frac{x_n+x}{2}\right)=1,
	\end{equation} we have that $\{x_n\}_{n}$ converges to $x$, where the ``double limit'' in \eqref{eq: doppiolimiteunif} is to be intended in the following sense: for every $\epsilon>0$ there exists $n_0\in\N$ such that if  $n,m\geq n_0$ then $\left|x_m^*\left(\frac{x_n+x}{2}\right)-1\right|<\epsilon$ (equivalently, $x_m^*\left(\frac{x_n+x}{2}\right)>1-\epsilon$).	 
\end{theorem}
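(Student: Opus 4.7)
The plan is to handle the two implications separately. The reverse implication is an easy reduction using a constant dual sequence, while the forward one relies on extracting a single functional from $\{x_m^*\}$ via weak-$*$ compactness; crucially, no reflexivity is needed because the uniformity of the Pringsheim formulation plays the role that reflexivity played in Theorem~\ref{th: equiv aLUR}.

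For the easy direction ($\Leftarrow$), given $x^*\in\D_X(x)$ and $\{x_n\}\subset B_X$ with $x^*(x_n)\to 1$, I first observe that $\|x_n\|\to 1$, so $y_n:=x_n/\|x_n\|\in S_X$ is well-defined for $n$ large and satisfies $\|x_n-y_n\|\to 0$ and $x^*(y_n)\to 1$. Setting $x_m^*:=x^*$ for every $m$, the condition \eqref{eq: doppiolimiteunif} reduces to $x^*((y_n+x)/2)\to 1$, which holds trivially; the hypothesis then gives $y_n\to x$, and hence $x_n\to x$, so $x$ is NSE.

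For the main direction ($\Rightarrow$), I argue by contradiction. Assume $x$ is NSE and that sequences $\{x_n\}\subset S_X$, $\{x_m^*\}\subset S_{X^*}$ satisfy the Pringsheim condition but, after extraction, $\|x_n-x\|>\epsilon$ for every $n$. The key step is that \eqref{eq: doppiolimiteunif} together with $\|x_m^*\|\leq 1$ yields, for every $\delta>0$, some $n_0$ with
$$x_m^*(x_n)>1-2\delta\quad\text{and}\quad x_m^*(x)>1-2\delta\qquad (m,n\geq n_0),$$
obtained by splitting $x_m^*((x_n+x)/2)>1-\delta$ into its two summands (each bounded above by $1$). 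In particular $x_m^*(x)\to 1$. I then take any $w^*$-cluster point $x^*\in B_{X^*}$ of $\{x_m^*\}$, which exists by Banach--Alaoglu. Since $x_m^*(x)\to 1$, we get $x^*(x)=1$, so $x^*\in\D_X(x)$; and for each fixed $n\geq n_0(\delta)$ the bound $x_m^*(x_n)>1-2\delta$ (valid for all $m\geq n_0(\delta)$) transfers to $x^*(x_n)\geq 1-2\delta$, because $w^*$-cluster points are pointwise cluster points. Since $\delta>0$ is arbitrary, $x^*(x_n)\to 1$, and NSE forces $x_n\to x$, contradicting $\|x_n-x\|>\epsilon$.

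The main obstacle, and the reason this works where Theorem~\ref{th: equiv aLUR} needed reflexivity, is the step of turning $\{x_m^*\}$ into a single functional $x^*$ that simultaneously lies in $\D_X(x)$ and witnesses $x^*(x_n)\to 1$. The Pringsheim formulation supplies the bound $x_m^*(x_n)>1-2\delta$ uniformly in both indices, which is precisely what allows a $w^*$-cluster point of $\{x_m^*\}$ in the (always $w^*$-compact) ball $B_{X^*}$ to inherit both conditions; the iterated limit of the aLUR definition does not furnish this cross-uniformity, which is why Theorem~\ref{th: equiv aLUR} has to work instead with $w$-cluster points of $\{x_n\}$ in $B_X$.
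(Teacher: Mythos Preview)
Your proof is correct, and for the forward implication it is actually cleaner than the paper's argument. Both proofs begin the same way: pass to a subsequence with $\|x_n-x\|>\epsilon$, take a $w^*$-cluster point $x^*\in B_{X^*}$ of $\{x_m^*\}$, and use the Pringsheim condition to see that $x^*\in\D_X(x)$. From here the routes diverge. The paper invokes NSE to get a uniform upper bound $x^*(x_n)\leq\delta<1$, then takes a $w^*$-cluster point $x^{**}$ of $\{x_n\}$ in $B_{X^{**}}$, uses the Pringsheim uniformity to obtain $x^*((x^{**}+x)/2)=1$, and finally appeals to the fact that a strongly exposed point of $B_X$ is also strongly exposed in $B_{X^{**}}$ by the same functional to conclude $x^{**}=x$ and reach a contradiction. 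Your argument bypasses the bidual entirely: you observe that the Pringsheim uniformity gives $x_m^*(x_n)>1-2\delta$ for all $m,n\geq n_0(\delta)$, which passes directly to the cluster point as $x^*(x_n)\geq 1-2\delta$ for $n\geq n_0(\delta)$, hence $x^*(x_n)\to 1$, and then NSE yields $x_n\to x$ immediately. This is more elementary (no bidual, no lifting of strongly exposed points) and makes transparent exactly which feature of the Pringsheim limit is doing the work --- the point you explain well in your final paragraph. In the easy direction you are also slightly more careful than the paper, which takes $\{x_n\}\subset S_X$ rather than $B_X$; your normalization step closes that small gap.
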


\begin{proof}
Let us prove the sufficiency part of the theorem. 
Let $x^*\in\D_X(x)$ and $\{x_n\}_{n}\subset S_X$ be such that $x^*(x_n)\to 1$. 
Then, if we define $x_m^*=x^*$ ($m\in\N$), \eqref{eq: doppiolimiteunif} is satisfied and, by our assumption, $x_n\to x$. We have proved that $x$ is strongly exposed by $x^*$ and hence, by the arbitrariness of $x^*\in\D_X(x)$,   that $x$ satisfies property NSE.

For the other implication, assume that $x$ satisfies property NSE and suppose on the contrary (passing to suitable subsequences if necessary) that there exist $\theta>0$  and sequences  $\{x_n\}_{n}\subset S_X$, $\{x_m^*\}_{m}\subset S_{X^*}$ such that:
\begin{itemize}
	\item \eqref{eq: doppiolimiteunif} holds, that is: for every $\epsilon>0$ there exists $n_0\in\N$ such that if  $n,m\geq n_0$ then $x_m^*\left(\frac{x_n+x}{2}\right)>1-\epsilon$;
	\item $\|x-x_n\|>\theta$, whenever $n\in\N$;
	\item there exists $l:=\lim_m x_m^*(x)$. 
\end{itemize} Let $x^*\in B_{X^*}$ be a $w^*$-cluster point of the sequence $\{x_m^*\}_{m}\subset S_{X^*}$.  Then, from \eqref{eq: doppiolimiteunif}, we have that $l=1$ and hence that $x^*\in\D_X(x)$. Since $x$ satisfies property NSE and $\|x-x_n\|>\theta$ for every $n\in\N$, there exists $\delta\in(0,1)$ such that $x^*(x_{n})\leq\delta$, whenever $n\in\N$. Now,  let $x^{**}\in B_{X^{**}}$ be a $w^*$-cluster point of the sequence $\{x_{n}\}_{n}$ and take any $\epsilon>0$, by our hypothesis there exists $n_0\in\N$ such that if  $n,m\geq n_0$ then $x_m^*\left(\frac{x_{n}+x}{2}\right)>1-\epsilon$. In particular, 
 if  $n\geq n_0$ then $x^*\left(\frac{x_{n}+x}{2}\right)>1-\epsilon$
 and hence $x^*\left(\frac{x^{**}+x}{2}\right)>1-\epsilon$. 

 By the arbitrariness of $\epsilon>0$ we have 
 that $x^*((x^{**}+x)/2)=1$, which implies $x^{**}(x^*)=1$. We get a contradiction, since $x^{**}$ is a $w^*$-cluster point of the sequence $\{x_{n}\}_{n}$ and $x^*(x_{n})\leq\delta$, whenever $n\in\N$.
\end{proof}

By using a standard technique about extension of norms (see \cite{DGZ}*{Lemma~8.1 in \S II}), the following lemma shows that, if a given equivalent norm on a  subspace $Y$ of $X$ is  extended in a suitable way to the whole  $X$, then NSE points of $S_Y$ are automatically NSE points  of $S_X$. For the sake of completeness we provide a sketch of the proof.

\begin{lemma}\label{lemma: NSE superspace}
Let $Y$ be a closed subspace of a Banach space $X$ and let $\|\cdot\|$ be an equivalent norm on $Y$. Then $\|\cdot\|$ can be extended to an equivalent norm $\seminorm{\cdot}$ on $X$ such that every  NSE point of $S_{(Y,\|\cdot\|)}$ is an NSE point of $S_{(X,\seminorm\cdot)}$.
\end{lemma}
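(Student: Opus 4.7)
The plan is to follow the classical extension procedure of \cite[Lemma~8.1, \S II]{DGZ}. Denoting by $|\cdot|$ the ambient norm on $X$, choose $K>0$ such that $\|y\|\leq K|y|$ for every $y\in Y$ and fix $D>K$. Define
\[
|\!|\!|x|\!|\!| \;:=\; \inf\bigl\{\|y\|+D|x-y|\,:\,y\in Y\bigr\},\qquad x\in X.
\]
Standard checks show that $|\!|\!|\cdot|\!|\!|$ is an equivalent norm on $X$ whose unit ball coincides with $\cconv\bigl(B_{(Y,\|\cdot\|)}\cup D^{-1}B_{(X,|\cdot|)}\bigr)$. For $y_0\in Y$, the chain
\[
\|y\|+D|y_0-y|\;\geq\;\|y\|+\|y_0-y\|\;\geq\;\|y_0\|\qquad (y\in Y),
\]
combined with the choice $y=y_0$, yields $|\!|\!|y_0|\!|\!|=\|y_0\|$. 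Taking polars, the dual norm satisfies
\[
|\!|\!|F|\!|\!|^* \;=\; \max\bigl(\|F|_Y\|_{Y^*},\,D^{-1}|F|_{X^*}\bigr),\qquad F\in X^*.
\]

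Next I would fix an NSE point $y_0\in S_{(Y,\|\cdot\|)}$ (so automatically $y_0\in S_{(X,|\!|\!|\cdot|\!|\!|)}$) and pick any $F\in\D_X(y_0)$ for the new norm. From $F(y_0)=1$ and the dual formula, $\|F|_Y\|_{Y^*}=1$, so that $F|_Y\in\D_Y(y_0)$ with respect to $\|\cdot\|$. Given $\{x_n\}\subset B_{(X,|\!|\!|\cdot|\!|\!|)}$ with $F(x_n)\to 1$, choose $y_n\in Y$ with $\|y_n\|+D|x_n-y_n|\leq 1+n^{-1}$ and set $w_n:=x_n-y_n$. The estimate
\[
F(x_n)\;\leq\;\|y_n\|+|F|_{X^*}|w_n|\;\leq\;1+n^{-1}-(D-|F|_{X^*})|w_n|,
\]
combined with $F(x_n)\to 1$, forces $|w_n|\to 0$ whenever $|F|_{X^*}<D$; hence $\|y_n\|\to 1$, $F(y_n/\|y_n\|)\to 1$, and the NSE property of $y_0$ in $(Y,\|\cdot\|)$ yields $y_n\to y_0$ in $\|\cdot\|$. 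Since $|\!|\!|w_n|\!|\!|\leq D|w_n|\to 0$, we conclude $|\!|\!|x_n-y_0|\!|\!|\to 0$, as required.

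The main obstacle is the degenerate case $|F|_{X^*}=D$, in which $F$ attains its $|\cdot|$-dual norm also on $D^{-1}B_{(X,|\cdot|)}$: then $w_n$ need not vanish and the slice exposed by $F$ could contain points of $D^{-1}B_{(X,|\cdot|)}\setminus\{y_0\}$. Overcoming this requires a careful refinement of the construction -- for instance enlarging $D$ or rotundifying the second summand of the convex hull so that no supporting functional at $y_0$ has $|\cdot|$-dual norm equal to $D$. This is precisely the role played by the careful choice of constants in \cite[Lemma~8.1, \S II]{DGZ}, and once this boundary case is handled the rest of the argument is routine bookkeeping.
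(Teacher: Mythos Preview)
Your proof has a genuine gap that is not a matter of bookkeeping. The degenerate case $|F|_{X^*}=D$ is not a boundary nuisance that the ``careful choice of constants'' in \cite[Lemma~8.1, \S II]{DGZ} takes care of; that lemma only produces an equivalent extension and says nothing about NSE points. In fact the inf-convolution norm you define can destroy the NSE property at $y_0$, regardless of how large $D$ is and regardless of whether $|\cdot|$ is rotund. Take $X=\R^2$ with the $\ell_\infty$ norm, $Y=\R\times\{0\}$ with $\|(t,0)\|=|t|$, $y_0=(1,0)$, and any $D>1$. Then the unit ball of $|\!|\!|\cdot|\!|\!|$ is the convex hull of the segment $[(-1,0),(1,0)]$ and the square $D^{-1}[-1,1]^2$, and the functional $F=(1,D-1)$ satisfies $|\!|\!|F|\!|\!|^*=1$, $F(y_0)=1$, yet $F\equiv 1$ on the whole edge from $(1,0)$ to $(1/D,1/D)$; hence $y_0$ is not strongly exposed by $F$ and is therefore not NSE in $(X,|\!|\!|\cdot|\!|\!|)$. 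Replacing $|\cdot|$ by the Euclidean norm changes nothing essential: there is still a flat edge from $y_0$ to the tangent point on the disk $D^{-1}B_{\ell_2}$, and the supporting functional along that edge again has $|F|_{X^*}=D$. So neither ``enlarging $D$'' nor ``rotundifying the second summand'' rescues the argument.

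The paper avoids this difficulty by a genuinely different construction. It first invokes \cite[Lemma~8.1, \S II]{DGZ} only to extend $\|\cdot\|$ to an equivalent norm on all of $X$, and \emph{then} sets
\[
|\!|\!|x|\!|\!|^2=\|x\|^2+\dist_{\|\cdot\|}^2(x,Y).
\]
The point of the quadratic combination is a transverse strict-convexity effect: if $|\!|\!|x_n|\!|\!|=|\!|\!|y_0|\!|\!|=1$ and $|\!|\!|x_n+y_0|\!|\!|\to 2$, then the parallelogram-type identity forces $\dist_{\|\cdot\|}(x_n,Y)\to 0$. This is exactly what fails for the inf-convolution ball, whose flat faces meeting $y_0$ allow $x_n$ to stay away from $Y$. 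Once $\dist(x_n,Y)\to 0$ is secured, the restriction $x^*|_Y\in\D_{(Y,\|\cdot\|)}(y_0)$ and the NSE hypothesis in $Y$ finish the proof as you intended.
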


\begin{proof} Without any loss of generality, we can suppose that  $\|\cdot\|$ is defined on the whole space $X$ (see \cite{DGZ}*{Lemma~8.1 in \S II} or \cite{FHHMZ}*{Proposition~2.14}). Let us define 
    \begin{equation*}
        \seminorm{x}^2=\|x\|^2 + \dist^2_{\|\cdot\|}(x,Y),\qquad\qquad x\in X.
    \end{equation*}
    Notice that $ \seminorm{y}=\|y\|$ for every $y\in Y$. Assume that  $y_0$ is an NSE point of $S_{(Y,\|\cdot\|)}$, and let us prove that $y_0$ is an NSE point of $S_{(X,\seminorm{\cdot})}$. To do that, let $x^*\in \D_{(X,\seminorm{\cdot})}(y_0)$  and  $\{x_n\}_n\subset S_{(X,\seminorm{\cdot})}$ be such that $x^*(x_n)\to 1$. Since
    \begin{equation*}
        2\geq \seminorm{x_n + y_0}\geq x^*(x_n+y_0) = x^*(x_n) + 1 \to 2,
    \end{equation*}
    it holds $\seminorm{x_n + y_0}\to 2$. Therefore
\begin{equation*}    2\seminorm{x_n}^2+2\seminorm{y_0}^2-\seminorm{x_n + y_0}^2\to 0,
\end{equation*}
    which implies $\dist_{\|\cdot\|}(x_n,Y)\to 0$ (see \cite{DGZ}*{Fact~2.3 in \S II}). Now, let $y_n \in Y$ $(n\in\N)$ be such that $\seminorm{y_n}=1$ and $\seminorm{x_n - y_n}\to 0$. Notice that
    \begin{equation*}
        x^*(y_n)=x^*(y_n-x_n)+x^*(x_n)\to 1.
    \end{equation*}
    Since $x^*|_Y\in \D_{(Y,\|\cdot\|)}(y_0)$ and $y_0$ is an NSE point of $S_{(Y,\|\cdot\|)}$, we have $\|y_n - y_0\|=\seminorm{y_n-y_0}\to 0$. Which clearly implies $x_n\to y_0$ in $(X,\seminorm{\cdot})$. By the arbitrariness of $x^*\in \D_{(X,\seminorm{\cdot})}(y_0)$, we have proved  that $y_0$ is an NSE point of $S_{(X,\seminorm{\cdot})}$.
\end{proof}

In the proof of our next theorem, we shall need the following easy observation. We provide a proof based on Theorem~\ref{th: char w-aLUR}. Alternatively, the proposition can be proved by using the very definition of $w$-aLUR and the Hahn-Banach theorem.

\begin{observation}\label{obs: not aLUR superspace} Let $Y$ be a closed subspace of a Banach space $X$. If $y_0\in S_Y$ is not a $w$-aLUR point of $S_Y$ then $y_0$ is not a $w$-aLUR point of $S_X$.
\end{observation}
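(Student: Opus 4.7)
The plan is to argue by contraposition and to reduce the statement via Theorem~\ref{th: char w-aLUR}. Applying that theorem to both $Y$ and $X$, the desired implication becomes: \emph{if $y_0$ is a rotund point of $B_{X^{**}}$, then $y_0$ is a rotund point of $B_{Y^{**}}$}. So the whole observation follows once this purely geometric fact about biduals is established.

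The tool I would use is the canonical isometric embedding $i^{**}\colon Y^{**}\to X^{**}$ induced by the inclusion $i\colon Y\hookrightarrow X$. That this map is isometric is standard and follows from the fact that $i^*\colon X^*\to Y^*$ is a quotient map by Hahn--Banach; moreover, for $y\in Y$ the canonical images of $y$ in $Y^{**}$ and in $X^{**}$ are identified under $i^{**}$. Granted this, the contrapositive is immediate: if $y_0$ fails to be a rotund point of $B_{Y^{**}}$, then there is $y^{**}\in S_{Y^{**}}\setminus\{y_0\}$ with $\|y_0+y^{**}\|_{Y^{**}}=2$; its image $i^{**}(y^{**})$ lies in $S_{X^{**}}\setminus\{y_0\}$ and satisfies $\|y_0+i^{**}(y^{**})\|_{X^{**}}=2$, contradicting that $y_0$ is a rotund point of $B_{X^{**}}$.

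I do not anticipate any genuine obstacle: the two ingredients (Theorem~\ref{th: char w-aLUR} and the isometric embedding of the bidual) are both off-the-shelf, and the matching of the two canonical images of $y_0$ is built into the definitions. As the paper remarks, an alternative route avoids the bidual entirely: given sequences $\{y_n\}\subset S_Y$ and $\{y_m^*\}\subset S_{Y^*}$ witnessing the failure of $w$-aLUR at $y_0$ inside $Y$, one would use Hahn--Banach to extend each $y_m^*$ to $x_m^*\in S_{X^*}$ and check that $\{y_n\}$ together with $\{x_m^*\}$ violates $w$-aLUR at $y_0$ inside $X$, using that restricting functionals of $X^*$ to $Y$ turns weak convergence in $X$ into weak convergence in $Y$.
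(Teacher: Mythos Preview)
Your proposal is correct and follows essentially the same route as the paper: both arguments invoke Theorem~\ref{th: char w-aLUR} to reduce the statement to the fact that a non-rotund point of $B_{Y^{**}}$ remains non-rotund in $B_{X^{**}}$, and then use the canonical isometric embedding $Y^{**}\hookrightarrow X^{**}$ to transfer the witnessing segment. The paper phrases this last step simply as ``$Y^{**}\subset X^{**}$ and $S_{X^{**}}\cap Y^{**}=S_{Y^{**}}$'', while you spell out the embedding $i^{**}$ more explicitly, but the content is the same.
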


\begin{proof}
    Assume that  $y_0\in S_Y$ is not a $w$-aLUR point of $S_Y$. By Theorem~\ref{th: char w-aLUR}, $y_0$ is not a rotund point of $B_{Y^{**}}$, that is, there exists $y^{**}\in S_{Y^{**}}\setminus\{y_0\}$ such that the segment $[y^{**},y_0]$ is contained in $S_{Y^{**}}$. Since $Y^{**}\subset X^{**}$ and $S_{X^{**}}\cap Y^{**}=S_{Y^{**}}$, the segment $[y^{**},y_0]$ is contained in $S_{X^{**}}$. By Theorem~\ref{th: char w-aLUR}, $y_0$ is not a $w$-aLUR point of $S_X$.
\end{proof}

We are now ready to show that in general property NSE does not imply property aLUR. More precisely, our next result shows that if $X$ is a non-reflexive Banach space, then it admits a renorming which satisfies NSE but not $w$-aLUR (and hence not aLUR) at a certain point $x\in S_X$. The construction is in part inspired by \cite{DEBESOMALUR}.

\begin{theorem}\label{t: nonreflexive}
    Let $(X,\|\cdot\|)$ be a non-reflexive Banach space. Then $X$ admits a renorming $\seminorm{\cdot}$ such that there exists $x\in S_{(X,\seminorm{\cdot})}$ which is NSE but not $w$-aLUR (and hence not aLUR).
\end{theorem}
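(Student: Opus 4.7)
The plan is to reduce the proof to the separable case and then construct an explicit renorming exploiting a norm non-attainment phenomenon available only in non-reflexive spaces. Every non-reflexive Banach space contains a separable non-reflexive closed subspace $Y$ (take, for instance, the closed linear span of a bounded sequence without weakly convergent subsequences). By Lemma~\ref{lemma: NSE superspace} any equivalent norm on $Y$ extends to an equivalent norm on $X$ preserving NSE at a chosen point, and by Observation~\ref{obs: not aLUR superspace} the failure of $w$-aLUR at a point of $Y$ transfers to the same point viewed in $X$. It therefore suffices to build the desired renorming on $Y$.

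By Theorem~\ref{th: char w-aLUR}, a point $x\in S_{(Y,|\!|\!|\cdot|\!|\!|)}$ fails to be $w$-aLUR precisely when there exists $x^{**}\in S_{Y^{**}}\setminus\{x\}$ with $|\!|\!|x+x^{**}|\!|\!|^{**}=2$. The aim is to exhibit such a pair with $x$ simultaneously NSE. The equality $|\!|\!|x+x^{**}|\!|\!|^{**}=2$ amounts to $\sup_{f\in B_{Y^*}}[f(x)+f(x^{**})]=2$; if this supremum were attained at some $f_0\in B_{Y^*}$, then $f_0\in \D_Y(x)$ with $f_0(x^{**})=1$, and, whenever $Y^*$ is separable, Goldstine together with the metrizability of $(B_{Y^{**}},w^*)$ would yield a sequence $(x_n)\subset B_Y$ with $x_n\to x^{**}$ in $\sigma(Y^{**},Y^*)$ and $f_0(x_n)\to 1$; NSE would then force $x_n\to x$ in norm, contradicting $x_n\to x^{**}\ne x$ weak$^*$. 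The construction must therefore arrange that this supremum is \emph{not attained} on $B_{Y^*}$, a property which, in the spirit of James' theorem, is characteristic of non-reflexivity.

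Concretely, one fixes an M-basis $(e_n,f_n)$ of $Y$ and a weak$^*$-cluster point $x^{**}\in Y^{**}\setminus Y$ of a carefully chosen sequence in $B_Y$, then defines a renorming of the type
\[
|\!|\!|y|\!|\!|^2=\|y\|^2+\sum_n a_n\, f_n(y)^2+(\text{additional squared terms tied to a non-norm-attaining functional}),
\]
with summable coefficients and extra summands tuned so that (i) a distinguished point $x\in S_{(Y,|\!|\!|\cdot|\!|\!|)}$ has a duality map whose every element strongly exposes $x$, delivering NSE (the squared penalty terms supply the convexity needed to upgrade $g(x_n)\to 1$ to $|\!|\!|x_n-x|\!|\!|\to 0$), and (ii) the bidual norm realises $|\!|\!|x+x^{**}|\!|\!|^{**}=2$ with the corresponding supremum attained only by a functional in $Y^{***}\setminus Y^*$. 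The main obstacle is the simultaneous achievement of (i) and (ii): strong exposition demands rotundity at $x$ along every direction visible to $Y^*$, while the bidual segment requires a crease in $S_{Y^{**}}$ invisible to every element of $\D_Y(x)$; non-reflexivity of $Y$ is precisely what makes room for such a hidden crease, and the heart of the argument is verifying that the explicit renorming produces both phenomena at the same point.
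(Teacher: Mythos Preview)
Your reduction to the separable case via Lemma~\ref{lemma: NSE superspace} and Observation~\ref{obs: not aLUR superspace} is correct and matches the paper exactly. Your conceptual analysis is also sound: by Theorem~\ref{th: char w-aLUR} the failure of $w$-aLUR at $x$ amounts to a nontrivial segment $[x,x^{**}]\subset S_{Y^{**}}$, and you correctly observe that no $f_0\in\D_Y(x)$ can attain value $1$ at $x^{**}$ if $x$ is NSE, so the supporting functional for this segment must live in $Y^{***}\setminus Y^*$. This is precisely the phenomenon the paper exploits.

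However, the proposal stops being a proof at the word ``Concretely''. You do not actually construct a renorming; you only describe the shape one might have and list the two properties it should satisfy, concluding that ``the heart of the argument is verifying that the explicit renorming produces both phenomena''---and then you do not perform that verification. In particular, the suggested template $\nn{y}^2=\|y\|^2+\sum_n a_n f_n(y)^2+(\text{additional terms})$ is both vague and pointed in the wrong direction: adding squared penalty terms is the standard device for \emph{increasing} rotundity (towards LUR), and it is not at all clear how such a norm would simultaneously create a bidual crease at $x$. You never specify the ``additional squared terms tied to a non-norm-attaining functional'', nor how the distinguished point $x$ or the bidual element $x^{**}$ are chosen, nor why the supremum defining $\nn{x+x^{**}}^{**}$ equals $2$ yet is unattained on $B_{Y^*}$.

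The paper's construction is quite different and much more concrete. It first uses Bishop--Phelps in $X^{***}$ to produce $x_1^{***}\in S_{X^{***}}$ attaining its norm at some $x_1^{**}\in S_{X^{**}}$ while satisfying $\sup_{B_X}|x_1^{***}|<\tfrac13$ (this is where non-reflexivity enters, via $X^{***}=X^\perp\oplus X^*$). It then builds an M-basis $(e_n,f_n)$ with $e_1,f_1$ adapted to $x_1^{***}$, pushes $B_{\ell_2}$ into $X$ through the compact operator $T\alpha=\sum \alpha_n n^{-2} e_n$, and takes the new unit ball to be $D=\conv\bigl(B_X\cup T(B_{\ell_2})\bigr)$. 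The point $e_1$ is then shown to be \emph{smooth} (so $\D(e_1)=\{f_1\}$), $f_1$ strongly exposes $e_1$ thanks to the uniform convexity of $\ell_2$ and the bound $|f_1|<\tfrac13$ on $B_X$, and the segment $[e_1,x_1^{**}]$ sits in $S_{X^{**}}$ because $x_1^{***}$ equals $1$ on both endpoints. None of these ingredients---the third-dual Bishop--Phelps step, the compact ellipsoid, the convex-hull renorming, the smoothness of $e_1$---appear in your proposal, and they are precisely what turns the strategy into a proof.
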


\begin{proof} Since every non-reflexive Banach space admits a non-reflexive separable subspace, in light of Lemma~\ref{lemma: NSE superspace} and Observation~\ref{obs: not aLUR superspace}, it is sufficient to prove the theorem in the case in which $X$  is separable. So, in the sequel of the proof, we suppose that $X$ is a non-reflexive separable Banach space.

We first observe that, since $X$ is non-reflexive, there exist $x^{**}_1\in S_{(X^{**},\|\cdot\|^{**})}$ and $x^{***}_1\in S_{(X^{***},\|\cdot\|^{***})}$ such that
\begin{enumerate}
    \item $x^{***}_1(x_1^{**})=1$;
    \item $0<\sup_{x\in B_{(X,\|\cdot\|)}}|x^{***}_1(x)|<\frac{1}{3}$.
\end{enumerate}
Indeed, we recall that $X^{***}=X^{\perp}\oplus X^*$ (see, e.g., \cite{FHHMZ}*{Exercise~4.7}). Let $x^{***}\in X^\perp$ be such that $\|x^{***}\|^{***}=1$. By the Bishop-Phelps theorem there exists $x^{***}_1 \in S_{(X^{***},\|\cdot\|^{***})}$ such that $\|x^{***}-x^{***}_1\|^{***}<\frac{1}{3}$, $x^{***}_1$ attains its norm on $B_{(X^{**},\|\cdot\|^{**})}$ at a certain point $x^{**}$, and $x^{***}_1\notin X^\perp$. By defining $x^{**}_1:= x^{**}$ we get (i). Moreover, for $x\in B_{(X,\|\cdot\|)}$, we have
\begin{equation*}
    |x^{***}_1(x)|=|x^{***}(x)-x^{***}_1(x)|\leq \|x\|\|x^{***}-x^{***}_1\|^{***}<\frac{1}{3}.
\end{equation*}
This proves (ii). In order to simplify our notation, we define $x^{***}_1|_{X}=x_1^*$. Notice that, since $x_1^{***}\notin X^{\perp}$, $x^*_1$ is different from the zero functional. Let $x_1\in X$ be such that $x^*_1(x_1)=1$, we clearly have that $x_1\notin B_{(X,\|\cdot\|)}$.  Then, by \cite{FHHMZ}*{Theorem~4.60}, $X$ has an M-basis $(e_n,f_n)_{n}$ such that $e_1=x_1$, $f_1=x_1^*$, and $\|e_n\|=1$, for every $n\geq 2$.  We consider the linear operator $T\colon (\ell_2,\|\cdot\|_2) \to (X,\|\cdot\|) $ defined by 
\[  T(\alpha)=\sum_{n=1}^{\infty} \frac{\alpha_n}{n^2} e_n,\qquad\qquad \alpha=(\alpha_n)_n\in\ell_2.\]
  We notice that the operator $T$ is well-defined, bounded, linear, one-to-one, and the range $Y:=T(\ell_2)$ contains $\{e_n\}_{n}$, therefore $Y$ is dense in $X$. 
 We set $B:=T(B_{\ell_2})$. 
Since $T$ is $w$-$w$-continuous and $B_{\ell_2}$ is $w$-compact, $B$ is $w$-compact.  Hence, we have that the set  $$D=\conv\bigl(B_{{(X,\|\cdot\|)}}\cup B\bigr)$$
	is closed in ${{X}}$.  
	Then by our definition and by symmetry,  $D$ is the closed unit ball of an equivalent norm $\seminorm{\cdot}$ on ${X}$. 
    \smallskip 
    
    \noindent\textit{Claim:} $B_{(X^{**},\seminorm{\cdot}^{**})}=\conv(B_{(X^{**},\|\cdot\|^{**})}\cup B).$\\
By applying Goldstine's Theorem, we have 
\begin{equation*}
\begin{split}
B_{(X^{**},\seminorm{\cdot}^{**})}&=\overline{D}^{w^*}=\overline{\conv}^{w^*}(B\cup B_{(X,\|\cdot\|)})\subset\overline{\conv}^{w^*}(\overline{B}^{w^*} \cup \overline{B_{(X,\|\cdot\|)}}^{w^*} )\\
&=\conv(B\cup \overline{B_{(X,\|\cdot\|)}}^{w^*})=\conv(B\cup B_{(X^{**},\|\cdot\|^{**})}).
\end{split}
\end{equation*}
On the other hand, we have $B\subset B_{(X,\seminorm{\cdot})}\subset B_{(X^{**},\seminorm{\cdot}^{**})}$, and 
\begin{equation*}
B_{(X^{**},\|\cdot\|^{**})}=\overline{B_{(X,\|\cdot\|)}}^{w^*}\subset \overline{\conv}^{w^*}(B\cup B_{(X,\|\cdot\|)})=\overline{D}^{w^*}= B_{(X^{**},\seminorm{\cdot}^{**})}.
\end{equation*}
Therefore, $\conv{(B_{(X^{**},\|\cdot\|^{**})}\cup B)}\subset B_{(X^{**},\seminorm{\cdot}^{**})} $, which proves the claim.
\smallskip

We are  going to show that $\seminorm{x^{***}_1}^{***}=1$. Indeed, $\sup_{x^{**}\in B_{(X^{**},\|\cdot\|^{**})}}|x^{***}_1(x^{**})|=1$. Thus, in light of the claim, it is enough to show that $\sup_{x\in B}|x_1^{***}(x)|\leq 1$. In order to do that, let $x\in B$. By the definition of the operator $T$, there exists $\alpha \in B_{\ell_2}$ such that $T(\alpha)=x$. Hence, we get
\begin{equation*}
  |x_1^{***}(x)|=|x^*_1(x)|=|x_1^{*}(T\alpha)|=\left|f_1\left(\sum_{n=1}^{\infty} \frac{\alpha_n}{n^2} e_n\right)\right|\leq\sum_{n=1}^{\infty} \frac{|\alpha_n|}{n^2} |f_1(e_n)|\leq|\alpha_1|\leq 1.
\end{equation*}
Since $x^{***}_1(x_1^{**})=1$, $x^{***}_1(e_1)=1$, and $\seminorm{x_1^{***}}^{***}=1$, we have that the segment $[ e_1, x_1^{**}]$ is contained in  $S_{(X^{**},\seminorm{\cdot}^{**})}$. Hence, the point $e_1\in X$ is not a rotund point of $B_{(X^{**},\seminorm{\cdot}^{**})}$, which shows, by Theorem~\ref{th: char w-aLUR}, that $e_1$ is not a \textit{w}-aLUR point (and hence in particular that $e_1$ is not an aLUR point) of $S_{(X,\seminorm{\cdot})}$. 

It remains to show that $e_1$ is an NSE point of $S_{(X,\seminorm{\cdot})}$. 
In order to do that, we first show that $e_1$ is a smooth point of the ball $B_{(X,\seminorm{\cdot})}$. Indeed, suppose on the contrary that there exist two distinct functionals $h_1,h_2\in S_{(X^*,\seminorm{\cdot}^*)}$ such that $h_1(e_1)=h_2(e_1)=1.$
Put $u^*_i=T^*(h_i).$ Since $T$ has dense range, $T^*$ is one-to-one, therefore $u_1^*\neq u_2^*$. Let us denote $u_1=(1,0,0,\dots)\in\ell_2$. Then $e_1=T(u_1)\in B$ and $h_i(e_1)=h_i(Tu_1)=u_i^*(u_1)=1.$ Also $h_i(e_1)=1=\sup h_i(B)$ $(i=1,2)$. It follows that 
\begin{equation*}
    1=\sup\{h_i(Tu)\colon u\in B_{\ell_2}\}=\sup\{u_i^*(u)\colon u\in B_{\ell_2}\}=\|u_i^*\|.
\end{equation*}
A contradiction as $\ell_2$ is smooth. Since $f_1(e_1)=1$ and $\sup f_1(D)\leq 1 $ we have that $f_1\in \D_{(X,\seminorm{\cdot})}(e_1)$. Now, let $\{x_n\}_{n}\subset B_{(X,\seminorm{\cdot})}$ be such that $f_1(x_n)\to 1$ as $n\to +\infty$.  For every $n\in\N$, there exist $\lambda_n\in [0,1]$, $y_n \in B$ and $z_n\in B_{(X,\|\cdot\|)}$ such that $x_n=\lambda_n y_n + (1-\lambda_n)z_n$. Observing that $|f_1(z_n)|<\frac{1}{3}$ for every $n$, we get $\lambda_n\to 1$ as $n\to \infty$. Hence, $f_1(y_n)\to 1$ as $n\to \infty$. For $n\in\N$, let $b_n\in B_{\ell_2}$ be such that $T(b_n)=y_n$. By our hypothesis, we have \[f_1(y_n)=f_1(T(b_n))=(T^*f_1)(b_n)\to 1\]
    By the definition of the operator $T$, the element $T^*(f_1)$ can be represented by the norm one element of $\ell_2$, defined by $z=(1,0,0,\dots)$. Observe that $z(b_n)\to 1$, $z\in S_{\ell_2}$, and $\{b_n\}_{n}\subset B_{\ell_2}$. By uniform convexity of $\ell_2$, we have that $b_n\to z$. By continuity of the operator $T$, we get that the sequence $\{y_n\}_{n}$ converges to $e_1$. Therefore, since $\{\lambda_n\}_{n}$ converges to $1$, we get that $\{x_n\}_{n}$ converges to $e_1$, which proves that $e_1$ satisfies property NSE.
\end{proof}

By combining  Theorems~\ref{th: equiv aLUR}~and~\ref{t: nonreflexive}, and Observation~\ref{obs: aLUR implies NSE}, we obtain the following characterization of reflexivity.

\begin{corollary}\label{c: characterizationreflexive}
    A Banach space $X$ is reflexive if and only if for every equivalent norm $\|\cdot\|$ on $X$ the set of all aLUR points of $S_{(X,\|\cdot\|)}$ coincides with the set of all NSE points of $S_{(X,\|\cdot\|)}$.
\end{corollary}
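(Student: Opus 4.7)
The plan is to deduce the corollary directly by combining the three earlier results: Observation~\ref{obs: aLUR implies NSE}, Theorem~\ref{th: equiv aLUR}, and Theorem~\ref{t: nonreflexive}. There is essentially no new argument needed; the only care required is to check that each auxiliary result applies in the right context (in particular, that reflexivity is a renorming-invariant property so that Theorem~\ref{th: equiv aLUR} can be applied to arbitrary equivalent norms).

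For the forward implication, I assume $X$ is reflexive and fix an arbitrary equivalent norm $\|\cdot\|$ on $X$. Since reflexivity is preserved under equivalent renormings, $(X,\|\cdot\|)$ is again reflexive, so Theorem~\ref{th: equiv aLUR} tells us that every NSE point of $S_{(X,\|\cdot\|)}$ is an aLUR point. Conversely, Observation~\ref{obs: aLUR implies NSE} (which does not require reflexivity) yields that every aLUR point of $S_{(X,\|\cdot\|)}$ is an NSE point. Therefore the two sets coincide for every equivalent norm.

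For the reverse implication, I argue by contraposition: suppose $X$ is non-reflexive. Then Theorem~\ref{t: nonreflexive} produces an equivalent norm $|\!|\!|\cdot|\!|\!|$ on $X$ and a point $x\in S_{(X,|\!|\!|\cdot|\!|\!|)}$ which is NSE but not aLUR (in fact not even $w$-aLUR). In particular, the set of NSE points of $S_{(X,|\!|\!|\cdot|\!|\!|)}$ strictly contains the set of aLUR points of the same sphere, so the coincidence condition in the statement fails for this particular renorming.

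There is no genuine obstacle in this final step, as the corollary is a straightforward bookkeeping consequence of the three results cited above; the substantive work is precisely in the proofs of Theorems~\ref{th: equiv aLUR} and~\ref{t: nonreflexive}. The only point worth explicitly recording in the write-up is the invariance of reflexivity under equivalent renormings, which justifies applying Theorem~\ref{th: equiv aLUR} to $(X,\|\cdot\|)$ rather than only to the original norm.
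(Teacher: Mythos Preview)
Your proposal is correct and matches the paper's approach exactly: the paper simply states that the corollary follows by combining Theorems~\ref{th: equiv aLUR} and~\ref{t: nonreflexive} with Observation~\ref{obs: aLUR implies NSE}, without further argument. Your added remark that reflexivity is invariant under equivalent renormings is a reasonable clarification but not something the paper bothers to spell out.
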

\medskip

\noindent Let us conclude the paper with a couple of remarks.

\begin{remark}\label{r: BaHuLigap}
    In \cite{BaHuLi04}*{Proposition 4.4} and \cite{BaHuLi04}*{Corollary 4.6} the authors claim that, for a point $x\in S_X$, the following two implications hold
    \begin{enumerate}
        \item NSE $\Rightarrow$ aLUR; 
        \item \textit{w}-NSE $\Rightarrow$ \textit{w}-aLUR,
    \end{enumerate}
  where $x\in S_X$ satisfies \textit{w}-NSE if $\{x_n\}_n$ converges weakly to $x$, whenever $x^*\in \D(x)$ and  $\{x_n\}_n\subset S_X$ are such that $x^*(x_n)\to 1$.  In light of Theorem \ref{t: nonreflexive}, both implications are not true in general (notice that NSE implies \textit{w}-NSE and in the proof of Theorem \ref{t: nonreflexive} we prove that the point $x_1$ is not \textit{w}-aLUR). By a careful reading of $(g)\Rightarrow (a)$ in \cite{BaHuLi04}*{Proposition 4.4} we realized that there is no reason for the sequence $\{y^*(y_n)\}_n$ to converge to 1. Moreover, it is worth to mention that the proof of \cite{BaHuLi04}*{Proposition 5.7} is based on \cite{BaHuLi04}*{Proposition 4.4} and  \cite{BaHuLi04}*{Corollary 4.6}, therefore it contains a gap.
\end{remark}

\begin{remark}\label{r: debesomfix}
In light of Observation~\ref{obs: aLUR implies NSE} and Theorem~\ref{th: equiv aLUR}, the results concerning aLUR points contained in \cite{DEBESOMALUR} remain true.
\end{remark}

\subsection*{Acknowledgements} 
We thank Profs. Constantin Z\u alinescu and Tommaso Russo for their valuable comments on the preliminary manuscript, and the anonymous referee for carefully reading it and suggesting fruitful improvements.

\end{document}